\documentclass[11pt]{amsart}

\usepackage{amsmath,amssymb,amsthm}
\usepackage{microtype}
\usepackage[colorlinks=true,
  linkcolor=black,
  citecolor=black,
  urlcolor=black]{hyperref}

\newtheorem{theorem}{Theorem}
\newtheorem{proposition}[theorem]{Proposition}

\newtheorem*{theoremA}{Theorem A}

\theoremstyle{remark}

\theoremstyle{definition}
\newtheorem*{question*}{Question}
\newtheorem{question}{Question}

\newcommand{\Cr}{C^*_r}
\newcommand{\K}{\mathcal K}
\newcommand{\Cu}{\operatorname{Cu}}

\title[Pureness in reduced group $C^*$-algebras]
{Nonamenable groups whose reduced group $C^*$-algebras are not pure}

\author{Jamie Bell}

\address[Jamie Bell]{Mathematical Institute, University of M\"unster, Einsteinstr. 62, 48149 Munster, Germany}

\email{jbell@uni-muenster.de}

\thanks{Funded by the Deutsche Forschungsgemeinschaft
(DFG, German Research Foundation) under Germany's Excellence Strategy EXC 2044/2 -390685587, Mathematics M\"unster: Dynamics--Geometry--Structure and project-ID 427320536, SFB 1442, of the DFG}

\subjclass[2020]{Primary 46L80; Secondary 46L55, 20E22}

\keywords{Reduced group $C^*$-algebra, pure $C^*$-algebra, wreath product, Bernoulli shift}

\begin{document}

\begin{abstract}
We exhibit nonamenable groups whose reduced group $C^*$-algebras are not pure. More precisely, if $\Gamma$ is any countably infinite discrete group, then the reduced group $C^*$-algebra of the restricted wreath product $(\mathbb Z/2\mathbb Z)\wr\Gamma$ has an ideal-quotient isomorphic to $\K(\ell^2(\Gamma))$. It therefore fails to be nowhere scattered and, in particular, is not pure. Taking $\Gamma$ to be any nonamenable group, we thereby obtain a negative answer to a question of Thiel.
\end{abstract}

\maketitle

\section{Introduction}

A $C^*$-algebra $A$ is called \emph{pure} if its Cuntz semigroup is almost unperforated and almost divisible, equivalently, $\Cu(A)\cong \Cu(A)\otimes\Cu(\mathcal Z)$,
where $\mathcal Z$ denotes the Jiang--Su algebra; see
\cite{APTV,APT}. Originally introduced by Winter in his seminal work \cite{Winter}, pure $C^*$-algebras are now known to form a robust class, being closed under ideals, quotients, extensions, and inductive limits \cite{APTV,PTV}. Recall that an \emph{ideal-quotient} of a $C^*$-algebra $A$ is a $C^*$-algebra of the form $I/J$, where $J\subseteq I$ are ideals of $A$. A
$C^*$-algebra is \emph{nowhere scattered} if none of its nonzero ideal-quotients is elementary \cite{ThielVilalta}. Since pure $C^*$-algebras are nowhere scattered, the existence of an elementary
ideal-quotient is an obstruction to being pure.

If a discrete group $G$ is amenable, then its trivial representation factors through $\Cr(G)$ and yields a quotient isomorphic to $\mathbb C$. Since $\mathbb C$ is not pure, it follows that pureness of $\Cr(G)$ implies nonamenability of $G$. This raises the converse question.

\begin{question*}
Is $\Cr(G)$ pure whenever $G$ is nonamenable?
\end{question*}

This question first appeared explicitly on Thiel's list of open problems \cite{ThielWeb}, and was subsequently reiterated in \cite[Question~2]{Bell}. Here, we resolve the question negatively. Somewhat surprisingly, the solution comes from a very natural construction. Our main result is the following.

\begin{theoremA}\label{T-A}
Let $\Gamma$ be a countably infinite discrete group. Then the reduced group $C^*$-algebra $\Cr((\mathbb Z/2\mathbb Z)\wr\Gamma)$ has an ideal-quotient isomorphic to $\K(\ell^2(\Gamma))$ and is therefore not nowhere scattered and hence not pure.
\end{theoremA}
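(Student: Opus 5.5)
Write $G=(\mathbb Z/2\mathbb Z)\wr\Gamma = H\rtimes\Gamma$ with $H=\bigoplus_\Gamma \mathbb Z/2\mathbb Z$. The plan is to identify $\Cr(G)$ with a reduced crossed product of a commutative algebra, find a closed invariant subset of the spectrum on which $\Gamma$ acts like translation on $\Gamma$ itself, and read off $\K(\ell^2(\Gamma))$ from it.

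\textbf{Step 1: crossed product picture.} Since $H$ is abelian and normal, $\Cr(G)\cong \Cr(H)\rtimes_r\Gamma\cong C(\widehat H)\rtimes_r\Gamma$. Here $\widehat H\cong \{0,1\}^\Gamma$ is the compact Cantor space, and $\Gamma$ acts by the Bernoulli shift. Write $X=\{0,1\}^\Gamma$.

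\textbf{Step 2: an invariant locally closed orbit.} Let $F\subseteq X$ be the set of configurations with finitely many $1$'s, and let $X_{\le 1}$ be those with at most one $1$. Then $X_{\le1}$ is closed and shift-invariant: a limit of points with at most one $1$ has at most one $1$. Its point $\mathbf 0$ is a closed fixed point. The set $U=X_{\le 1}\setminus\{\mathbf 0\}=\{\delta_g:g\in\Gamma\}$ is open in $X_{\le1}$ and invariant. It is discrete, since $\delta_g$ is isolated in $X_{\le1}$ via the cylinder $\{x: x_g=1\}$. The shift acts on $U$ freely and transitively, so $U\cong\Gamma$ with the translation action. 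Moreover $\delta_g\to\mathbf 0$ as $g\to\infty$, because $\Gamma$ is infinite; this gives $X_{\le1}\cong \Gamma\cup\{\infty\}$, the one-point compactification.

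\textbf{Step 3: passing to ideal-quotients.} Let $I=C_0(X\setminus\{\mathbf 0\})\rtimes_r\Gamma$ and $J=C_0(X\setminus X_{\le1})\rtimes_r\Gamma$. Both are ideals of $C(X)\rtimes_r\Gamma$ because the corresponding open sets are invariant, and $J\subseteq I$. I then want $I/J\cong C_0(U)\rtimes_r\Gamma\cong c_0(\Gamma)\rtimes_r\Gamma\cong\K(\ell^2(\Gamma))$, where the last isomorphism is the standard one for the translation action (Green's imprimitivity, or direct computation with matrix units $\delta_g u_{gh^{-1}}$).

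\textbf{Main obstacle.} The delicate point is exactness of the reduced crossed product for $0\to C_0(X\setminus X_{\le1})\to C_0(X\setminus\{\mathbf 0\})\to C_0(U)\to0$, since $\Gamma$ may be non-exact. I would avoid needing exactness in general. There is always a surjection $I/J\to C_0(U)\rtimes_r\Gamma$, and I only need an ideal-quotient that is elementary. So take the kernel $K$ of $I\to C_0(U)\rtimes_r\Gamma$ in place of $J$. Then $I/K\cong\K(\ell^2(\Gamma))$ outright, and $K$ is an ideal of $\Cr(G)$ because it is an ideal of the ideal $I$. (An ideal of an ideal is an ideal in a $C^*$-algebra.)

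The map is a surjection onto $C_0(U)\rtimes_r\Gamma$ because reduced crossed products carry equivariant surjections to surjections, which is immediate on $C_c$ level and by continuity of the induced map for the reduced norms. This continuity follows since regular representations of the quotient compose with the quotient map to give covariant representations weakly contained in the regular ones.

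Finally, since $\K(\ell^2(\Gamma))$ is elementary and nonzero, $\Cr(G)$ is not nowhere scattered, hence not pure by the facts recalled in the introduction.
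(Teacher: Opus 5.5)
Your proof is correct and is essentially the paper's argument. It uses the same ingredients: the Bernoulli-shift picture, the closed invariant set of configurations with at most one $1$ (the paper's $Y\cong\Gamma^+$), and $c_0(\Gamma)\rtimes_r\Gamma\cong\K(\ell^2(\Gamma))$, relying only on the facts that invariant ideals embed and equivariant surjections descend to reduced crossed products. The only difference is the order of operations: you take the ideal $C_0(X\setminus\{\mathbf 0\})\rtimes_r\Gamma$ first and then quotient by a kernel, whereas the paper first passes to the quotient $C(Y)\rtimes_r\Gamma$ and then takes the ideal $C_0(\Gamma)\rtimes_r\Gamma$, which spares it the ideal-of-an-ideal step (your exactness concern is in fact moot here, since $I/J$ is a completion of $C_c(\Gamma,c_0(\Gamma))$ lying between the full and reduced crossed products, which coincide for the translation action, so $K=J$).
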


In previous work of the author \cite{Bell}, it was shown that reduced crossed products arising from equicontinuous actions of several classes of nonamenable groups are pure, and in the minimal case are often selfless. By contrast, the proof of Theorem~A involves identifying $\Cr((\mathbb Z/2\mathbb Z)\wr\Gamma)$ with the reduced crossed product $C(\{0,1\}^\Gamma)\rtimes_r \Gamma$ of the Bernoulli shift, which essentially lies at the opposite end of the dynamical spectrum. It therefore seems interesting to determine the regularity of crossed products of actions with intermediate dynamical properties (cf.~Question 2). 

Upon publication of this note, the author was informed that the idea of embedding the action of a group $\Gamma$ on its one-point compactification into $(\mathbb{Z}/2\mathbb{Z})\wr \Gamma$ was already used in the proof of \cite[Proposition 3.7]{BBLS}.

\medskip
\noindent\textit{Acknowledgements}. 
This work was completed during a visit to Chalmers University of Technology and the University of Gothenburg. I am grateful to the organisers of the workshop \textit{Cuntz Semigroups and Dynamics} for their hospitality during this time. I also thank Eduardo Scarparo for bringing \cite{BBLS} to my attention. 

\medskip
\noindent\textit{AI Declaration}.
ChatGPT-5.6 Sol was used during the preparation of this work, including identifying the wreath-product example and preparing a draft of the manuscript. The author subsequently verified the mathematical arguments and takes full responsibility for the contents of the paper.

\section{Compactified free orbits}

Let $\Gamma$ be a countably infinite discrete group, and let $\Gamma^+=\Gamma\sqcup\{\infty\}$ be its one-point compactification. We let $\Gamma$ act on $\Gamma^+$
by left translation on $\Gamma$ and fixing $\infty$.

\begin{proposition}\label{prop:orbit}
Let $\Gamma\curvearrowright X$ be an action on a compact Hausdorff space. Suppose there is a closed invariant subspace $Y\subseteq X$ which is $\Gamma$-equivariantly homeomorphic to $\Gamma^+$. Then $C(X)\rtimes_r\Gamma$ has an ideal-quotient isomorphic to $\K(\ell^2(\Gamma))$.
\end{proposition}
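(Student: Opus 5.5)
Since $Y$ is closed and invariant, restriction $C(X)\to C(Y)$ is a surjective equivariant $*$-homomorphism. It induces a surjection $C(X)\rtimes_r\Gamma\to C(Y)\rtimes_r\Gamma$, and the identification $Y\cong\Gamma^+$ gives $C(Y)\rtimes_r\Gamma\cong C(\Gamma^+)\rtimes_r\Gamma$. Surjectivity is clear: the image contains $C(Y)$ and the unitaries $u_g$, and it is closed. Because an ideal-quotient of a quotient of $A$ is an ideal-quotient of $A$ (pull ideals back along the quotient map), it suffices to find an ideal of $C(\Gamma^+)\rtimes_r\Gamma$ isomorphic to $\K(\ell^2(\Gamma))$; we may take $J=0$ at this last stage.

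Next, $\Gamma$ is an open invariant subset of $\Gamma^+$, so $C_0(\Gamma)$ is a $\Gamma$-invariant ideal of $C(\Gamma^+)$. The closed span of $C_0(\Gamma)\cdot\{u_g\}$ inside $C(\Gamma^+)\rtimes_r\Gamma$ is therefore a two-sided ideal. It is the closure of the image of $C_c(\Gamma,C_0(\Gamma))$, and hence is canonically isomorphic to $C_0(\Gamma)\rtimes_r\Gamma$. This is the standard fact that reduced crossed products by invariant ideals embed as ideals. I would justify it by noting that the regular representation of $C(\Gamma^+)\rtimes_r\Gamma$ induced from a faithful representation of $C(\Gamma^+)$ restricts, on the ideal, to a regular representation induced from a faithful representation of $C_0(\Gamma)$. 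Equivalently, one can check it through the faithful conditional expectation onto $C(\Gamma^+)$, which maps the ideal onto $C_0(\Gamma)$.

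Finally, I would show $C_0(\Gamma)\rtimes_r\Gamma\cong\K(\ell^2(\Gamma))$ for the left translation action. This is the classical identification for a free transitive action, via Green's imprimitivity or Stone–von Neumann. Concretely, I would represent the algebra on $\ell^2(\Gamma)$ by multiplication operators $M_f$ together with the left regular unitaries $\lambda_g$; this covariant pair is the integrated form. The elements $M_{\delta_x}\lambda_{g}M_{\delta_y}$ are exactly the matrix units $e_{x,y}$ with $x=gy$, so the image is the norm closure of the span of matrix units, which is $\K(\ell^2(\Gamma))$. It remains to see that this representation is faithful on the reduced crossed product. The regular representation on $\ell^2(\Gamma)\otimes\ell^2(\Gamma)$ is unitarily equivalent to a multiple of this one, via the unitary $\delta_x\otimes\delta_h\mapsto\delta_{hx}\otimes\delta_h$ or a similar twist. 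Alternatively, one can argue that the matrix units generate a simple algebra and the induced map is nonzero.

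The main technical point is this last faithfulness and the identification of the ideal with the reduced crossed product of $C_0(\Gamma)$. Both are standard, but I would state them carefully, using the conditional-expectation argument to avoid any amenability hypothesis; no exactness of $\Gamma$ is needed, since we only use an ideal, not exactness of a sequence. Combining the quotient map with this ideal gives an ideal-quotient $I/J$ of $C(X)\rtimes_r\Gamma$ isomorphic to $\K(\ell^2(\Gamma))$. Here $J$ is the kernel of $C(X)\rtimes_r\Gamma\to C(\Gamma^+)\rtimes_r\Gamma$, and $I$ is the preimage of the ideal.
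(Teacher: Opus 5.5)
Your proposal is correct and follows the same route as the paper: pass to the quotient $C(Y)\rtimes_r\Gamma$, realise $C_0(\Gamma)\rtimes_r\Gamma$ as an ideal there, and identify it with $\K(\ell^2(\Gamma))$. Your extra details (pulling back ideals, the conditional-expectation injectivity argument, the explicit matrix units) spell out what the paper cites as standard. One small slip: the unitary $\delta_x\otimes\delta_h\mapsto\delta_{hx}\otimes\delta_h$ alone carries the regular representation to $(M_f\otimes 1,\lambda_g\otimes\lambda_g)$, not to a multiple of $(M_f,\lambda_g)$. You need a further Fell-absorption twist, for instance the combined unitary $\delta_x\otimes\delta_h\mapsto\delta_{hx}\otimes\delta_{x^{-1}}$, which lands on $(M_f\otimes 1,\lambda_g\otimes 1)$. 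Alternatively, your matrix-unit argument works directly inside the crossed product.
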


\begin{proof}
Since $Y$ is closed and invariant, $C(Y)$ is an equivariant quotient of $C(X)$, hence the restriction map induces a surjection $C(X)\rtimes_r\Gamma\twoheadrightarrow C(Y)\rtimes_r\Gamma$. The open invariant subset $Y\setminus\{\infty\}$ is equivariantly homeomorphic to $\Gamma$, by assumption. Consequently, $C_0(\Gamma)\rtimes_r\Gamma$ embeds as an ideal in $C(Y)\rtimes_r\Gamma$. One has the standard
isomorphism $C_0(\Gamma)\rtimes_r\Gamma\cong\K(\ell^2(\Gamma))$, 
as a special case of Green's imprimitivity theorem. Thus $\K(\ell^2(\Gamma))$ is an ideal-quotient of $C(X)\rtimes_r\Gamma$.
\end{proof}

\section{The wreath product counterexample}

Consider the (restricted) wreath product $G=(\mathbb Z/2\mathbb Z)\wr\Gamma = (\bigoplus_\Gamma\mathbb Z/2\mathbb Z)\rtimes\Gamma$, where $\Gamma$ acts by shifting the coordinates. We have
\begin{equation}\label{eq:bernoulli}
   \Cr(G)\cong C(\{0,1\}^{\Gamma})\rtimes_r\Gamma,
\end{equation}
where the action on $\{0,1\}^{\Gamma}$ is the Bernoulli shift. Let
$x_\infty$ denote the zero configuration in $\{0,1\}^{\Gamma}$. For
each $\gamma\in\Gamma$, let $x_\gamma$ be the configuration supported
at the single coordinate $\gamma$, and set
\[
   Y=\{x_\infty\}\cup\{x_\gamma:\gamma\in\Gamma\}
    =\bigl\{x\in\{0,1\}^{\Gamma}:
       |\operatorname{supp}(x)|\leq 1\bigr\},
\]
where $\operatorname{supp}(x)
   =\{\gamma\in\Gamma:x(\gamma)=1\}$. The subset $Y$ is invariant under the Bernoulli shift. It is also
closed. Indeed, if $x\notin Y$, then there exist distinct
$\gamma,\eta\in\Gamma$ such that $x(\gamma)=x(\eta)=1$, and the
cylinder set
\[
   U_{\gamma,\eta}
   =\{z\in\{0,1\}^{\Gamma}:z(\gamma)=z(\eta)=1\}
\]
is an open neighbourhood of $x$ disjoint from $Y$. Hence
$\{0,1\}^{\Gamma}\setminus Y$ is open. The bijection $\varphi : \Gamma^+\to Y$ given by $\gamma\mapsto x_\gamma$ and $\infty\mapsto x_\infty$ is an equivariant homeomorphism. Indeed,
every $x_\gamma$ is isolated in $Y$, while a neighbourhood basis at
$x_\infty$ is given by the sets $\{x_\infty\}\cup
   \{x_\gamma:\gamma\in\Gamma\setminus F\}$, for $F\subseteq\Gamma$ finite, which correspond under $\varphi$ to the standard neighbourhood basis at $\infty$ in the one-point compactification $\Gamma^+$. Thus $Y\cong\Gamma^+$ equivariantly.

\begin{proof}[Proof of Theorem A]
The existence of the ideal-quotient follows from
\eqref{eq:bernoulli}, Proposition~\ref{prop:orbit}, and the closed invariant subspace $Y$ constructed above. It remains to verify the assertion about pureness. If the reduced group $C^*$-algebra were pure, then so would be each of its ideal-quotients, since pureness passes to ideals and quotients. On the other hand, $\Cu(\K)\cong\{0,1,2,\ldots,\infty\}$ is not almost divisible. For example, there is no $y$ such that $2y\leq 1\leq 3y$. Thus $\K(\ell^2(\Gamma))$ is not pure, giving a contradiction.
\end{proof}

We conclude by posing the following natural questions.

\begin{question}
Is there a group-theoretic characterisation of those discrete groups $G$ for which $\Cr(G)$ is pure?
\end{question}

%In light of the results here and \cite{Bell}, we also ask the following. 

\begin{question}
Let $G\curvearrowright X$ be an action of a (nonamenable) discrete group on a compact Hausdorff space. Under which dynamical conditions is $C(X)\rtimes_r G$ nowhere scattered?
\end{question}


\begin{thebibliography}{99}

\bibitem{APT}
R.~Antoine, F.~Perera, and H.~Thiel.
Tensor products and regularity properties of Cuntz semigroups.
{\it Mem. Amer. Math. Soc.} \textbf{251} (2018).

\bibitem{APTV}
R.~Antoine, F.~Perera, H.~Thiel, and E.~Vilalta.
Pure $C^*$-algebras.
arXiv:2406.11052.

\bibitem{Bell}
J.~Bell.
Nonnuclear Bunce--Deddens algebras.
arXiv:2607.28597.

\bibitem{BBLS}
K.~A.~Brix, C.~Bruce, K.~Li, and E.~Scarparo. 
Maximal ideals of reduced group $C^*$-algebras and Thompson's groups.
{\it Trans. Amer. Math. Soc.} {\bf 379} (2026), 4307--4321.

\bibitem{PTV}
F.~Perera, H.~Thiel, and E.~Vilalta.
Extensions of pure $C^*$-algebras.
{\it Trans. Amer. Math. Soc.} (to appear).

\bibitem{ThielVilalta}
H.~Thiel and E.~Vilalta.
Nowhere scattered $C^*$-algebras.
{\it J. Noncommut. Geom.} \textbf{18} (2024), 231--263.

\bibitem{ThielWeb}
H.~Thiel.
OA-Problems: Pure group $C^*$-algebras.
\url{https://hannesthiel.org/pure-group-c-algebras}.

\bibitem{Winter}
W.~Winter.
Nuclear dimension and $\mathcal Z$-stability of pure $C^*$-algebras.
{\it Invent. Math.} \textbf{187} (2012), 259--342.

\end{thebibliography}
\end{document}